\theoremstyle{plain}
\newtheorem{theorem}{Theorem}[section]
\newtheorem{corollary}{Corollary}[section]
\newtheorem{proposition}{Proposition}[section]
\theoremstyle{definition}
\newtheorem{definition}{Definition}[section]
\newtheorem{remark}{Remark}[section]
\newtheorem{example}{Example}[section]
\begin{document}

\title{A note on the basic Lichnerowicz cohomology of
transversally locally conformally K\"{a}hlerian foliations}
\author{Cristian Ida}
\date{}
\maketitle
\begin{abstract}
In this paper we generalize the basic Lichnerowicz cohomology on transversally locally conformally K\"{a}hlerian foliations and we study its relation with basic
Bott-Chern cohomology and $0$--th basic Dolbeault cohomology with values in the
associated foliated weight bundle.
\end{abstract}

\medskip 
\begin{flushleft}
\strut \textbf{2000 Mathematics Subject Classification:} 53C12, 53C55, 57R30, 32C35, 55N30.  

\textbf{Key Words:} transversally locally conformally K\"{a}hlerian foliation, Lichnerowicz
cohomology, Bott-Chern cohomology. 
\end{flushleft}

\section{Introduction and preliminaries}
\setcounter{equation}{0}
\subsection{Introduction}
A locally conformally K\"{a}hler (LCK) manifold $M$ is a complex manifold whose universal cover $\widetilde{M}$ has a K\"{a}hler metric $g$ such that $\pi_1(M)$  acts on $(\widetilde{M},g)$ holomorphically and conformally. The fundamental properties of LCK manifolds were studied by Vaisman, Kashiwada, Dragomir,  Ornea and  Verbitsky.

The Lichnerowicz cohomology, also known in literature as Morse-Novikov cohomology, is a cohomology defined for a smooth manifold $M$ and a closed $1$--form $\theta$. It is defined by twisting the usual differential of the de Rham complex $\Omega^{\bullet}(M)$ of $M$; namely, the Lichnerowicz cohomology is the cohomology of a complex $(\Omega^{\bullet}(M),d_{\theta})$, where $d_{\theta}$ is defined by $d_{\theta}\varphi=d\varphi-\theta\wedge\varphi$. This cohomology was originally defined by Lichnerowicz and Novikov in the context of Poisson geometry and Hamiltonian mechanics, respectively.
Lichnerowicz cohomology is naturally defined for a LCK manifold with its canonical closed $1$--form called the Lee form, \cite{O-V, Va5}. Using the complex structures, variants of Lichnerowicz cohomology are defined for the Dolbeault cohomology and the Bott-Chern cohomology. These can be considered as fundamental invariants of LCK manifolds as established mainly by  Vaisman, Ornea and  Verbitsky.

In a paper by Barletta and Dragomir \cite{B-D} is introduced a new class of foliations called transversally locally conformally K\"{a}hler foliations (transversally LCK foliations), which is a foliated version of LCK manifolds roughly in the following sense: This class of foliations has a LCK structure on the direction transverse to the leaves. For instance, the simple foliation defined by a $C^{\infty}$ submersion $f:\mathcal{M}\rightarrow M$ of $\mathcal{M}$ onto a LCK manifold $M$ is transversally LCK. The case where the dimension of the leaves is zero corresponds to the original LCK manifolds.

The aim of this note is to extend some theories related to Lichnerowicz cohomology and its variants to basic forms on transversally LCK foliations. In this sense, in the preliminary subsection following \cite{EK1, EK-G}, we make a short review on the de Rham and Dolbeault theory for basic forms on transversally (holomorphic) foliations. The second section is dedicated to study of the basic Lichnerowicz cohomology of transversally LCK foliations. The relation of this cohomology with basic Bott-Chern cohomology and $0$--th
basic Dolbeault cohomology with values in the foliated weight bundle of a transversally LCK foliation is also studied, obtaining a basic version of some known results in the case of LCK manifolds due to  Ornea and  Verbitski \cite{O-V}.

\subsection{Preliminaries}
Let us consider $\mathcal{M}$ an $(n + m)$--dimensional manifold which will be assumed to be connected and orientable. Differential forms (and in particular functions) will take their values in the field of complex numbers $\mathbb{C}$. If $\varphi$ is a form, then $\overline{\varphi}$ denote its complex conjugate and we say that $\varphi$ is \textit{real} if $\varphi=\overline{\varphi}$.
\begin{definition}
A codimension $n$ foliation $\mathcal{F}$ on $\mathcal{M}$ is defined by a foliated cocycle $\{U_i,\varphi_i,f_{i,j}\}$ such that:
\begin{enumerate}
\item[(i)] $\{U_i\}$, $i\in I$ is an open covering of $\mathcal{M}$;
\item[(ii)] For every $i\in I$, $\varphi_i:U_i\rightarrow M$ are submersions, where $M$ is an $n$--dimensional
manifold;
\item[(iii)] The maps $f_{i,j}:\varphi_i(U_i\cap U_j)\rightarrow \varphi_j(U_i\cap U_j)$ satisfy
\begin{equation}
\varphi_j=f_{i,j}\circ\varphi_i
\label{I1}
\end{equation}
for every $(i,j)\in I\times I$ such that $U_i\cap U_j\neq\emptyset$.
\end{enumerate}
\end{definition}
Every fibre of $\varphi_i$ is called a \textit{plaque} of the foliation. Condition ({\ref{I1}}) says that, on the intersection $U_i\cap U_j$ the plaques defined respectively by $\varphi_i$ and $\varphi_j$ coincides. The manifold $\mathcal{M}$ is decomposed into a family of disjoint immersed connected submanifolds of dimension $m$; each of these submanifolds is called a \textit{leaf} of $\mathcal{F}$.

We say that $\mathcal{F}$ is \textit{transversally orientable} if on $M$ can be given an orientation which is preserved by all $f_{i,j}$. By $T\mathcal{F}$ we denote the tangent bundle to $\mathcal{F}$ and $\Gamma(\mathcal{F})$ is the space of its global sections i.e. vector fields tangent to $\mathcal{F}$. We say that a differential form $\varphi$ is \textit{basic} if it satisfies $i_{X}\varphi=\mathcal{L}_{X}\varphi=0$ for every $X\in\Gamma(\mathcal{F})$, where $i_X$ and $\mathcal{L}_X$ denote the interior product and Lie derivative with respect to $X$, respectively.  A \textit{basic function} is a function constant on the leaves; such functions form an algebra denoted by $\mathcal{F}_{b}(\mathcal{M})$. The quotient $Q\mathcal{F}=T\mathcal{M}/T\mathcal{F}$ is the normal bundle of $\mathcal{F}$. A vector field $Y\in\mathcal{X}(\mathcal{M})$ is said to be \textit{foliated} if, for every $X\in\Gamma(\mathcal{F})$ we have $[X,Y]\in\Gamma(\mathcal{F})$; $\mathcal{X}(\mathcal{M},\mathcal{F})$ denotes the algebra of foliated vector fields on $\mathcal{M}$. The quotient $\mathcal{X}(\mathcal{M}/\mathcal{F})=\mathcal{X}(\mathcal{M},\mathcal{F})/\Gamma(\mathcal{F})$ is called the algebra of \textit{basic vector fields} on $\mathcal{M}$.

In  this paper a system of local coordinates adapted to the foliation $\mathcal{F}$ means coordinates $(z^1,\ldots,z^n,y^1,\ldots,y^m)$ on an open subset $U$ on which the foliation is defined by the equations $dz^i=0,\,i=1,\ldots,n$. If $\mathcal{F}$ is transversally holomorphic (see
Defnition 1.2.2 below) $z^1,\ldots,z^n$ will be complex coordinates.

We recall that a transverse structure to $\mathcal{F}$ is a geometric structure on $M$ invariant by all the local diffeomorphisms $f_{i,j}$. Such a transverse structure can be considered as a geometric structure on the leaf space $\mathcal{M}/\mathcal{F}$ (which is not a manifold in general).
\begin{definition}
\begin{enumerate}
\item[1.2.1.] If $M$ is a Riemannian manifold and all the $f_{i,j}$ are isometries then $\mathcal{F}$ is said
to be Riemannian. This means that the normal bundle $Q\mathcal{F}$ is equipped with a Riemannian metric which is "invariant along the leaves".
\item[1.2.2.] If $M$ is a complex manifold and all the $f_{i,j}$ are biholomorphic maps then we say
that $\mathcal{F}$ is transversally holomorphic. In that case, any transversal to $\mathcal{F}$ inherits a
complex structure.
\item[1.2.3.] If $M$ is a Hermitian manifold and all the $f_{i,j}$ preserve the Hermitian structure
then we say that $\mathcal{F}$ is Hermitian. (The $f_{i,j}$ are in particular biholomorphic maps
and isometries.) The normal bundle $Q\mathcal{F}$ is equipped with
a Hermitian metric "invariant along the leaves".
\item[1.2.4.] If $M$ is a K\"{a}hlerian manifold and all the $f_{i,j}$ preserve the K\"{a}hler structure we say that $\mathcal{F}$ is transversally K\"{a}hlerian. In particular such a foliation is Hermitian. This
is equivalent to the existence of a Hermitian metric $g$ on the normal bundle $Q\mathcal{F}$
which can be writen in a transverse local system of coordinates $(z^1,\ldots,z^n)$ in the
form $g=g_{j\overline{k}}(z,\overline{z})dz^j\otimes d\overline{z}^k$ such that its skew-symmetric part $\omega=\frac{i}{2}g_{j\overline{k}}(z,\overline{z})dz^j\wedge d\overline{z}^k$ is closed ($\omega$ is a basic $2$--form called the basic K\"{a}hler form of $\mathcal{F}$).
\end{enumerate}
\end{definition}
Throughout this paper we consider $\mathcal{F}$ to be transversally holomorphic with $2n$ codimension. Let $\Omega^r(\mathcal{M}/\mathcal{F})$ be the space of all basic forms of degree $r$. It is easy to see that the exterior derivative of a basic form is also a basic form. Indeed, if $\varphi\in\Omega^r(\mathcal{M}/\mathcal{F})$ then $i_{X}\varphi=\mathcal{L}_{X}\varphi=0$ for any $X\in\Gamma(\mathcal{F})$ and, then by Cartan's formulas $\mathcal{L}_X=i_Xd+di_X$ and $d^2=0$ it follows that $i_Xd\varphi=\mathcal{L}_Xd\varphi=0$ for any $X\in\Gamma(\mathcal{F})$. Let us denote by $d_b=d|_{\Omega^{\bullet}(\mathcal{M}/\mathcal{F})}$ the restriction of exterior derivative to basic forms. Then we have $d_b:\Omega^{\bullet}(\mathcal{M}/\mathcal{F})\longrightarrow\Omega^{\bullet+1}(\mathcal{M}/\mathcal{F})$ and the differential complex 
\begin{equation}
0\longrightarrow\Omega^0(\mathcal{M}/\mathcal{F})\stackrel{d_b}{\longrightarrow}\Omega^1(\mathcal{M}/\mathcal{F})\stackrel{d_b}{\longrightarrow}\ldots\stackrel{d_b}{\longrightarrow}\Omega^{2n}(\mathcal{M}/\mathcal{F})\longrightarrow0
\label{I2}
\end{equation}
which is called the \textit{basic de Rham complex} of $\mathcal{F}$; its cohomology is the basic de Rham cohomology $H^{\bullet}(\mathcal{M}/\mathcal{F})$. Now, we consider $Q_{\mathbb{C}}\mathcal{F}=Q\mathcal{F}\otimes_{\mathbb{R}}\mathbb{C}$ the complexified normal bundle of $\mathcal{F}$. Let $J$ be the automorphism of $Q_{\mathbb{C}}\mathcal{F}$ associated to the complex structure; $J$ satisfies $J^2=-{\rm Id}$ and then has two eigenvalues $i$ and $-i$ with associated eigensubbundles respectively denoted by $Q^{1,0}\mathcal{F}$ and $Q^{0,1}\mathcal{F}=\overline{Q^{1,0}\mathcal{F}}$. We have a splitting $Q_{\mathbb{C}}\mathcal{F}=Q^{1,0}\mathcal{F}\oplus Q^{0,1}\mathcal{F}$ which gives rise to decomposition
\begin{displaymath}
\bigwedge^r(Q_{\mathbb{C}}^*\mathcal{F})=\bigoplus_{p+q=r}\bigwedge^{p,q}(Q_{\mathbb{C}}^*\mathcal{F}),
\end{displaymath}
where $\bigwedge^{p,q}(Q_{\mathbb{C}}^*\mathcal{F})=\bigwedge^p(Q^{1,0*}\mathcal{F})\otimes\bigwedge^q(Q^{0,1*}\mathcal{F})$. Basic sections of $\bigwedge^{p,q}(Q_{\mathbb{C}}^*\mathcal{F})$ are called \textit{basic forms of type $(p,q)$} on $(\mathcal{M},\mathcal{F})$. They form a vector space denoted by $\Omega^{p,q}(\mathcal{M}/\mathcal{F})$. We have
\begin{equation}
\Omega^{r}(\mathcal{M}/\mathcal{F})=\bigoplus_{p+q=r}\Omega^{p,q}(\mathcal{M}/\mathcal{F}).
\label{I3}
\end{equation}
As in the classical case of a complex manifold, see \cite{M-K}, the basic exterior derivative is decomposed into two operators
\begin{displaymath}
\partial_b:\Omega^{p,q}(\mathcal{M}/\mathcal{F})\rightarrow\Omega^{p+1,q}(\mathcal{M}/\mathcal{F})\,,\,\overline{\partial}_b:\Omega^{p,q}(\mathcal{M}/\mathcal{F})\rightarrow\Omega^{p,q+1}(\mathcal{M}/\mathcal{F}).
\end{displaymath}
We have $\partial_b^2=\overline{\partial}_b^2=0$ and $\partial_b\overline{\partial}_b+\overline{\partial}_b\partial_b=0$. The differential complex
\begin{equation}
0\longrightarrow\Omega^{p,0}(\mathcal{M}/\mathcal{F})\stackrel{\overline{\partial}_b}{\longrightarrow}\Omega^{p,1}(\mathcal{M}/\mathcal{F})\stackrel{\overline{\partial}_b}{\longrightarrow}\ldots\stackrel{\overline{\partial}_b}{\longrightarrow}\Omega^{p,n}(\mathcal{M}/\mathcal{F})\longrightarrow0
\label{I4}
\end{equation}
is called the \textit{basic Dolbeault complex} of $\mathcal{F}$; its cohomology $H^{p,\bullet}(\mathcal{M}/\mathcal{F})$ is the basic Dolbeault cohomology of foliation $\mathcal{F}$.

\section{Basic Lichnerowicz cohomology of transversally  locally conformally K\"{a}hlerian foliations}
\setcounter{equation}{0}
\subsection{Basic Lichnerowicz cohomology}
Let $(\mathcal{M},\mathcal{F})$ be a transversally holomorphic foliation and $\theta\in\Omega^1(\mathcal{M}/\mathcal{F})$ be a closed basic $1$-form. Denote by $d_{b,\theta}:\Omega^r(\mathcal{M}/\mathcal{F})\rightarrow\Omega^{r+1}(\mathcal{M}/\mathcal{F})$ the map $d_{b,\theta}=d_b-\theta\wedge$.

Since $d_b\theta=0$, we easily obtain that $d_{b,\theta}^2=0$. The differential complex
\begin{equation}
0\longrightarrow\Omega^0(\mathcal{M}/\mathcal{F})\stackrel{d_{b,\theta}}{\longrightarrow}\Omega^1(\mathcal{M}/\mathcal{F})\stackrel{d_{b,\theta}}{\longrightarrow}\ldots\stackrel{d_{b,\theta}}{\longrightarrow}\Omega^{2n}(\mathcal{M}/\mathcal{F})\longrightarrow0
\label{II1}
\end{equation}
is called the \textit{ basic Lichnerowicz complex} of $(\mathcal{M},\mathcal{F})$; its cohomology groups $H^{\bullet}_{\theta}(\mathcal{M}/\mathcal{F})$ are called the \textit{ basic Lichnerowicz cohomology groups} of $(\mathcal{M},\mathcal{F})$.

This is a basic version of the classical Lichnerowicz cohomology,  motivated by Lichnerowicz's
work \cite{L} or Lichnerowicz-Jacobi cohomology on Jacobi and locally conformal
symplectic manifolds, see \cite{Ba, L-L-M}. We also notice that Vaisman in \cite{Va5} studied it under the name of "adapted cohomology" on locally conformal K\"{a}hler (LCK) manifolds. Some notions concerning to a such basic Lichnerowicz cohomology of real foliations may be found in \cite{Ha}.

We notice that, locally, the basic Lichnerowicz complex becames the basic de Rham complex after a change $\varphi\mapsto e^f\varphi$ with $f$ a basic function which satisfies $d_bf=\theta$,
namely $d_{b,\theta}$ is the unique differential in $\Omega^{\bullet}(\mathcal{M}/\mathcal{F})$ which makes the multiplication by the smooth basic function $e^f$ an isomorphism of cochain basic complexes $e^f:(\Omega^{\bullet}(\mathcal{M}/\mathcal{F}),d_{b,\theta})\rightarrow(\Omega^{\bullet}(\mathcal{M}/\mathcal{F}),d_b)$.
\begin{proposition}
The basic Lichnerowicz cohomology depends only on the basic class of $\theta$. In fact, we have the  isomorphism $H^r_{\theta-d_bf}(\mathcal{M}/\mathcal{F})\approx H^r_{\theta}(\mathcal{M}/\mathcal{F})$.
\end{proposition}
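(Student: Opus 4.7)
The plan is to exhibit an explicit isomorphism between the two cochain complexes $(\Omega^{\bullet}(\mathcal{M}/\mathcal{F}),d_{b,\theta})$ and $(\Omega^{\bullet}(\mathcal{M}/\mathcal{F}),d_{b,\theta-d_bf})$ given by multiplication by a suitable exponential, just as the paragraph preceding the proposition suggests in the local setting. The point is that changing $\theta$ by an exact basic form $d_bf$ corresponds globally to conjugating the twisted differential by a nowhere-vanishing basic function.

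Concretely, I would define $\mu_f:\Omega^{\bullet}(\mathcal{M}/\mathcal{F})\to\Omega^{\bullet}(\mathcal{M}/\mathcal{F})$ by $\mu_f(\varphi)=e^{-f}\varphi$. The first step is to check that $\mu_f$ really maps basic forms to basic forms: since $f\in\mathcal{F}_b(\mathcal{M})$, also $e^{-f}\in\mathcal{F}_b(\mathcal{M})$, and then for every $X\in\Gamma(\mathcal{F})$ one has $i_X(e^{-f}\varphi)=e^{-f}i_X\varphi=0$ and $\mathcal{L}_X(e^{-f}\varphi)=(\mathcal{L}_Xe^{-f})\varphi+e^{-f}\mathcal{L}_X\varphi=0$. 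Moreover $\mu_{-f}$ is a two-sided inverse, so $\mu_f$ is a $\mathbb{C}$-linear bijection of each $\Omega^r(\mathcal{M}/\mathcal{F})$.

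The central computation is then the intertwining identity
\begin{equation*}
d_{b,\theta-d_bf}\circ\mu_f=\mu_f\circ d_{b,\theta}.
\end{equation*}
This is a direct Leibniz-rule check: for $\varphi\in\Omega^r(\mathcal{M}/\mathcal{F})$,
\begin{equation*}
d_{b,\theta-d_bf}(e^{-f}\varphi)=d_b(e^{-f}\varphi)-(\theta-d_bf)\wedge(e^{-f}\varphi)=-e^{-f}d_bf\wedge\varphi+e^{-f}d_b\varphi-e^{-f}\theta\wedge\varphi+e^{-f}d_bf\wedge\varphi,
\end{equation*}
and the two terms involving $d_bf$ cancel, leaving $e^{-f}(d_b\varphi-\theta\wedge\varphi)=\mu_f(d_{b,\theta}\varphi)$. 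Hence $\mu_f$ is an isomorphism of cochain complexes.

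Finally, passing to cohomology, $\mu_f$ induces the desired isomorphism $H^r_\theta(\mathcal{M}/\mathcal{F})\xrightarrow{\ \sim\ } H^r_{\theta-d_bf}(\mathcal{M}/\mathcal{F})$ for every $r$. There is no real obstacle here; the only care required is to keep track of signs in the Leibniz expansion and to verify the basic character of $e^{-f}\varphi$, both of which are routine.
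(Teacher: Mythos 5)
Your proof is correct and follows essentially the same route as the paper: the paper verifies the identity $d_{b,\theta}(e^{f}\varphi)=e^{f}d_{b,\theta-d_bf}\varphi$ and uses multiplication by $e^{f}$ as the chain isomorphism, which is exactly the inverse of your map $\mu_f=e^{-f}\cdot$. Your additional check that $e^{-f}\varphi$ remains basic is a welcome (if routine) detail that the paper omits.
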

\begin{proof}
Since $d_{b,\theta}(e^f\varphi)=e^fd_{b,\theta-d_bf}\varphi$ it results that the map $[\varphi]\mapsto[e^f\varphi]$ is an isomorphism between $H^r_{\theta-d_bf}(\mathcal{M}/\mathcal{F})$ and $H^r_{\theta}(\mathcal{M}/\mathcal{F})$.
\end{proof}
For the basic Lichnerowicz cohomology, similar basic complexes of Dolbeault and Bott-Chern type can be defined. Taking into account the decomposition $\theta=\theta^{1,0}+\theta^{0,1}$, consider the Hodge components of the basic Lichnerowicz differential $d_{b,\theta}=d_b-\theta\wedge$ as
\begin{equation}
d_{b,\theta}=\partial_{b,\theta}+\overline{\partial}_{b,\theta}\,,\,\partial_{b,\theta}=\partial_{b}-\theta^{1,0}\wedge\,,\,\overline{\partial}_{b,\theta}=\overline{\partial}_{b}-\theta^{0,1}\wedge.
\label{II6}
\end{equation}
The diferential complex
\begin{equation}
\ldots\stackrel{\overline{\partial}_{b,\theta}}{\longrightarrow}\Omega^{p,q-1}(\mathcal{M}/\mathcal{F})\stackrel{\overline{\partial}_{b,\theta}}{\longrightarrow}\Omega^{p,q}(\mathcal{M}/\mathcal{F})\stackrel{\overline{\partial}_{b,\theta}}{\longrightarrow}\ldots
\label{II7}
\end{equation}
is called the \textit{ basic Dolbeault-Lichnerowicz complex} of $(\mathcal{M},\mathcal{F})$; its cohomology groups denoted by $H^{p,\bullet}_{\theta}(\mathcal{M}/\mathcal{F})$ are called the \textit{ basic Dolbeault-Lichnerowicz cohomology groups} of $(\mathcal{M},\mathcal{F})$.

The differential complex
\begin{equation}
\label{II8}
\Omega^{p-1,q-1}(\mathcal{M}/\mathcal{F})\stackrel{\partial_{b,\theta}\overline{\partial}_{b,\theta}}{\longrightarrow}\Omega^{p,q}(\mathcal{M}/\mathcal{F})\stackrel{\partial_{b,\theta}\oplus\overline{\partial}_{b,\theta}}{\longrightarrow}\Omega^{p+1,q}(\mathcal{M}/\mathcal{F})\oplus\Omega^{p,q+1}(\mathcal{M}/\mathcal{F})
\end{equation}
is called the \textit{basic Bott-Chern-Lichnerowicz complex} of $(\mathcal{M},\mathcal{F})$ and its cohomology groups 
\begin{displaymath}
H^{\bullet,\bullet}_{BCL}(\mathcal{M}/\mathcal{F})=\frac{{\rm Ker}\{\Omega^{\bullet,\bullet}\stackrel{\partial_{b,\theta}}{\longrightarrow}\Omega^{\bullet+1,\bullet}\}\cap{\rm Ker}\{\Omega^{\bullet,\bullet}\stackrel{\overline{\partial}_{b,\theta}}{\longrightarrow}\Omega^{\bullet,\bullet+1}\}}{{\rm Im} \{\Omega^{\bullet-1,\bullet-1}\stackrel{\partial_{b,\theta}\overline{\partial}_{b,\theta}}{\longrightarrow}\Omega^{\bullet,\bullet}\}}
\end{displaymath}
are called the \textit{ basic Bott-Chern-Lichnerowicz cohomology groups} of $(\mathcal{M},\mathcal{F})$.

In the end of this subsection we apply some considerations from \cite{Va5} for basic forms and we obtain a relation between a twisted basic cohomology associated to $\theta$ and basic real cohomology of $(\mathcal{M},\mathcal{F})$. For every $\theta$ as above, let us consider now the auxiliary basic operator $\widetilde{d}_b=d_b-\frac{r}{2}\theta\wedge$ where $r$ is the degree of the basic form acted on. We notice that $\widetilde{d}_b$ is an antiderivation of basic differential forms and it is easy to see that $\widetilde{d}_b^2=-\frac{1}{2}\theta\wedge d_{b}$. Then $\widetilde{d}_b$ defines a \textit{twisted basic cohomology} of basic differential forms of $(\mathcal{M},\mathcal{F})$, which is given by
\begin{equation}
H^{\bullet}_{\widetilde{d}_b}(\mathcal{M}/\mathcal{F})=\frac{{\rm Ker}\, \widetilde{d}_b}{{\rm Im}\, \widetilde{d}_b\cap {\rm Ker}\,\widetilde{d}_b}
\label{II3}
\end{equation} 
and is isomorphic to the cohomology  of the basic complex  $(\widetilde{\Omega}^{\bullet}(\mathcal{M}/\mathcal{F}), \widetilde{d}_b)$ consisting of the basic differential forms $\varphi\in\Omega^{\bullet}(\mathcal{M}/\mathcal{F})$ satisfying $\widetilde{d}_b^2\varphi=-\theta\wedge d_b\varphi=0$.

The basic complex $\widetilde{\Omega}^{\bullet}(\mathcal{M}/\mathcal{F})$ admits a basic subcomplex $\Omega_{\theta}^{\bullet}(\mathcal{M}/\mathcal{F})$, namely, the ideal generated by $\theta$. On this subcomplex, $\widetilde{d}_b=d_b$, which means that it is a basic subcomplex of the usual basic de Rham complex of $(\mathcal{M},\mathcal{F})$. Hence, one has the homomorphisms
\begin{equation}
a:H^r(\Omega_{\theta}^{\bullet}(\mathcal{M}/\mathcal{F}))\rightarrow H^r_{\widetilde{d}_b}(\mathcal{M}/\mathcal{F})\,,\,b:H^r(\Omega_{\theta}^{\bullet}(\mathcal{M}/\mathcal{F}))\rightarrow H^r(\mathcal{M}/\mathcal{F},\mathbb{R}).
\label{II4}
\end{equation}
Now, we can easily construct a homomorphism
\begin{equation}
c:H^r_{\widetilde{d}_b}(\mathcal{M}/\mathcal{F})\rightarrow H^{r+1}(\mathcal{M}/\mathcal{F},\mathbb{R}).
\label{II5}
\end{equation}
Indeed, if $[\varphi]\in H^r_{\widetilde{d}_b}(\mathcal{M}/\mathcal{F})$, where $\varphi$ is  $\widetilde{d}_b$-closed basic form, then we put $c([\varphi])=[\theta\wedge\varphi]$, and this produces the homomorphism from ({\ref{II5}}). We notice that the existence of $c$ gives some relation between $\widetilde{d}_b$ and the basic real cohomology of $(\mathcal{M},\mathcal{F})$.

\begin{remark}
If we consider the decomposition $\widetilde{d}_b=\widetilde{\partial}_b+\widetilde{\overline{\partial}}_b$ we can construct analogous of homomorphisms $a, b$ and $c$ from \eqref{II4} and \eqref{II5}, respectively, for corresponding basic Dolbeault cohomology.
\end{remark}

\subsection{Basic Lichnerowicz cohomology of transversally LCK foliations}
In this subsection we consider the notion of transversally locally conformally K\"{a}hlerian foliation that is a version of locally conformally K\"{a}hler manifold notion, see \cite{Va3, Va4, Va5}, for transversally K\"{a}hlerian foliations, and we investigate some problems related to basic Lichnerowicz cohomology for such structures. 
\begin{definition}
A locally conformally transversally K\"{a}hlerian foliation, briefly transversally LCK foliation, is a transversally Hermitian foliation $(\mathcal{M},\mathcal{F},g)$ (in the sense of Definition 1.2.3) for which an open covering $\{U_i\}$ of $\mathcal{M}$ exists, and for each $i$ a
basic function $\sigma_i:U_i\rightarrow\mathbb{R}$ such that $\widetilde{g}_i=e^{-\sigma_i}(g|_{U_i})$ is a transverse K\"{a}hler metric on $U_i$ (in the sense of Definition 1.2.4).
\end{definition}
It is easy to see that $\theta|_{U_i}=d_b\sigma_i$ defines a global $d_b$--closed $1$--form, and $(\mathcal{M},\mathcal{F},\omega)$ has the characteristic property \cite{B-D}:
\begin{equation}
d_b\omega=\theta\wedge\omega,
\label{II12}
\end{equation}
where $\omega$ is the basic Hermitian form on $(\mathcal{M},\mathcal{F})$. If we take $U_i=\mathcal{M}$, then $(\mathcal{M},\mathcal{F},\omega)$ is called globally conformally K\"{a}hlerian foliation. The basic form $\theta$ is called the basic Lee form of $(\mathcal{M},\mathcal{F},\omega)$. It is exact iff $(\mathcal{M},\mathcal{F},\omega)$ is globally conformal K\"{a}hlerian foliation.

\begin{example}
A simple foliation defined by a $C^{\infty}$ submersion $f:\mathcal{M}\rightarrow M$  of $\mathcal{M}$ onto a LCK manifold $M$ is transversally LCK foliation. The case where the dimension of the
leaves is zero corresponds to the original LCK manifolds.
\end{example}

\begin{remark}
If $(\mathcal{M},\mathcal{F})$ is a complex analytic foliated manifold, \cite{Va1}, then similarly to Proposition 1.1 from \cite{Va3}, the foliation $\mathcal{F}$ is a transversally LCK foliation if and only if its transverse bundle $Q\mathcal{F}$ has a K\"{a}hler metric which is locally conformally with a
foliated Hermitian metric, or equivalently $(\mathcal{M},\mathcal{F})$ has a Hermitian metric which is locally conformally with a bundle-like metric.
\end{remark}

Now, if $(\mathcal{M},\mathcal{F},\omega)$ is a transversally LCK foliation with basic Lee form $\theta$, then due to ({\ref{II12}}) we have $d_{b,\theta}\omega=0$. Therefore, $\omega$ represents a cohomology class $[\omega]_L$ in the basic Lichnerowicz complex $(\Omega^{\bullet}(\mathcal{M}/\mathcal{F}),d_{b,\theta})$.
\begin{definition}
The basic cohomology class $[\omega]_L\in H^2_\theta(\mathcal{M}/\mathcal{F})$ is called the \textit{basic Lichnerowicz class} of the transversally LCK foliation $(\mathcal{M},\mathcal{F},\omega)$.
\end{definition}
This invariant is a basic version of Morse-Novikov class of LCK manifolds, see \cite{O-V}.

Also, if we consider the decomposition $d_{b,\theta}=\partial_{b,\theta}+\overline{\partial}_{b,\theta}$ we have $\partial_{b,\theta}\omega=\overline{\partial}_{b,\theta}\omega=0$ and so $\omega$ represents a cohomology class $[\omega]_{BCL}$ in the basic Bott-Chern-Lichnerowicz complex of $(\mathcal{M},\mathcal{F},\omega)$.
\begin{definition}
If $(\mathcal{M},\mathcal{F},\omega)$ is a transversally LCK foliation then the cohomology class $[\omega]_{BCL}\in H^{1,1}_{BCL}(\mathcal{M}/\mathcal{F})$ is called the \textit{ basic Bott-Chern-Lichnerowicz class} of $(\mathcal{M},\mathcal{F},\omega)$.
\end{definition}
Thus, for any transversally  LCK  foliation we have three basic cohomological invariants:
\begin{enumerate}
\item[$\bullet$] the basic Lee class $[\theta]\in H^1(\mathcal{M}/\mathcal{F})$;
\item[$\bullet$] the basic Lichnerowicz class $[\omega]_L\in H^2_{\theta}(\mathcal{M}/\mathcal{F})$;
\item[$\bullet$] the basic Bott-Chern-Lichnerowicz class $[\omega]_{BCL}\in H^{1,1}_{BCL}(\mathcal{M}/\mathcal{F})$.
\end{enumerate}
Now, using an argument inspired from \cite{L-L-M1}, we briefly present an another basic cohomology associated to transversally LCK foliations which is connected with the basic Lichnerowicz cohomology of transversally LCK foliations. Let $(\mathcal{M},\mathcal{F},\omega)$ be a transversally LCK foliation with  basic Lee form $\theta$. We consider the basic closed $1$-forms $\theta_0$ and $\theta_1$ defined by
\begin{equation}
\theta_0=m\theta\,\,{\rm and}\,\,\theta_1=(m+1)\theta,\,\,m\in\mathbb{R}.
\label{y1}
\end{equation}
Denote by $H^{\bullet}_{\theta_0}(\mathcal{M}/\mathcal{F})$ and $H^{\bullet}_{\theta_1}(\mathcal{M}/\mathcal{F})$ the basic Lichnerowicz cohomologies of the basic complexes $(\Omega^{\bullet}(\mathcal{M}/\mathcal{F}),d_{\theta_0})$ and $(\Omega^{\bullet}(\mathcal{M}/\mathcal{F}),d_{\theta_1})$, repectively.

Now, let $\widehat{\Omega}^k(\mathcal{M}/\mathcal{F})=\Omega^k(\mathcal{M}/\mathcal{F})\oplus\Omega^{k-1}(\mathcal{M}/\mathcal{F})$ and $\widehat{d}_b:\widehat{\Omega}^k(\mathcal{M}/\mathcal{F})\rightarrow\widehat{\Omega}^{k+1}(\mathcal{M}/\mathcal{F})$ be the basic differential operator defined by
\begin{equation}
\widehat{d}_b(\varphi,\psi)=(d_{b,\theta_1}\varphi-\omega\wedge\psi,-d_{b,\theta_0}\psi).
\label{y2}
\end{equation}
Using ({\ref{II12}}), by direct calculus it follows $\widehat{d}_b^2=0$. Thus, we can consider the basic complex $(\widehat{\Omega}^{\bullet}(\mathcal{M}/\mathcal{F}),\widehat{d}_b)$ and $\widehat{H}^{\bullet}(\mathcal{M}/\mathcal{F})$ the associated basic cohomology. We have the following result which relates $\widehat{H}^{\bullet}(\mathcal{M}/\mathcal{F})$ with basic Lichnerowicz cohomologies $H_{\theta_0}^{\bullet}(\mathcal{M}/\mathcal{F})$ and $H_{\theta_1}^{\bullet}(\mathcal{M}/\mathcal{F})$.
\begin{proposition}
Let $(\mathcal{M},\mathcal{F},\omega)$ be a transversally LCK foliation with basic Lee form $\theta$. Suppose that $i^k:\Omega^k(\mathcal{M}/\mathcal{F})\rightarrow\widehat{\Omega}^k(\mathcal{M}/\mathcal{F})$ and $\pi_2^k:\widehat{\Omega}^k(\mathcal{M}/\mathcal{F})\rightarrow\Omega^{k-1}(\mathcal{M}/\mathcal{F})$ are homomorphisms of $\mathcal{F}_b(\mathcal{M},\mathbb{R})$-modules defined by
\begin{displaymath}
i^k(\varphi)=(\varphi,0) \,\,{\rm and}\,\,\pi_2^k(\varphi,\psi)=\psi,
\end{displaymath}
for $\varphi\in\Omega^k(\mathcal{M}/\mathcal{F})$ and $\psi\in\Omega^{k-1}(\mathcal{M}/\mathcal{F})$. Then:
\begin{enumerate}
\item[i)] The mappings $i^k$ and $\pi_2^k$ induce an exact sequence of basic complexes
\begin{displaymath}
0\longrightarrow(\Omega^{\bullet}(\mathcal{M}/\mathcal{F}),d_{b,\theta_1})\stackrel{i^k}{\longrightarrow}(\widehat{\Omega}^{\bullet}(\mathcal{M}/\mathcal{F}),\widehat{d}_b)\stackrel{\pi_2^k}{\longrightarrow}(\Omega^{\bullet-1}(\mathcal{M}/\mathcal{F}),-d_{b,\theta_0})\longrightarrow0.
\end{displaymath}
\item[ii)] This exact sequence induces a long exact sequence
\begin{displaymath}
\ldots\, H^k_{\theta_1}(\mathcal{M}/\mathcal{F})\stackrel{(i^k)^*}{\longrightarrow}\widehat{H}^k(\mathcal{M}/\mathcal{F})\stackrel{(\pi_2^k)^*}{\longrightarrow}H^{k-1}_{\theta_0}(\mathcal{M}/\mathcal{F})\stackrel{-\delta^{k-1}}{\longrightarrow}H^{k+1}_{\theta_1}(\mathcal{M}/\mathcal{F})\,\ldots,
\end{displaymath}
where the connecting homomorphism $-\delta^{k-1}$ is defined by
\begin{equation}
(-\delta^{k-1})[\varphi]=[\varphi\wedge\omega],\,\,\forall\,[\varphi]\in H^{k-1}_{\theta_0}(\mathcal{M}/\mathcal{F}).
\label{y3}
\end{equation}
\end{enumerate}
\end{proposition}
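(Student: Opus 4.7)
The plan is to establish part (i) by direct verification that $i^k$ and $\pi_2^k$ commute with the differentials, followed by a pointwise check of exactness at each term; part (ii) then follows from the standard zig-zag construction of the long exact cohomology sequence, with the only real content being the identification of the connecting homomorphism.

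For (i), I first compute
\[
\widehat{d}_b(i^k(\varphi))=\widehat{d}_b(\varphi,0)=(d_{b,\theta_1}\varphi,0)=i^{k+1}(d_{b,\theta_1}\varphi),
\]
so $i^\bullet$ is a cochain map, and
\[
\pi_2^{k+1}(\widehat{d}_b(\varphi,\psi))=-d_{b,\theta_0}\psi=(-d_{b,\theta_0})(\pi_2^k(\varphi,\psi)),
\]
so $\pi_2^\bullet$ intertwines $\widehat{d}_b$ with the differential $-d_{b,\theta_0}$ appearing on the target. Injectivity of $i^k$ is immediate, $\pi_2^k$ is surjective via $\psi\mapsto(0,\psi)$, and $\ker\pi_2^k=\{(\varphi,0):\varphi\in\Omega^k(\mathcal{M}/\mathcal{F})\}=\operatorname{Im}(i^k)$, giving pointwise exactness.

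For (ii), I invoke the classical zig-zag lemma applied to the short exact sequence from (i) to produce the long exact cohomology sequence. The only step requiring attention is the explicit form of the connecting homomorphism. Given $[\psi]\in H^{k-1}_{\theta_0}(\mathcal{M}/\mathcal{F})$ represented by a $d_{b,\theta_0}$-closed form $\psi$, I pick the preimage $(0,\psi)\in\widehat{\Omega}^k(\mathcal{M}/\mathcal{F})$ under $\pi_2^k$ and compute
\[
\widehat{d}_b(0,\psi)=(-\omega\wedge\psi,\,-d_{b,\theta_0}\psi)=(-\omega\wedge\psi,\,0)=i^{k+1}(-\omega\wedge\psi),
\]
so $\delta^{k-1}[\psi]=[-\omega\wedge\psi]$ and hence $(-\delta^{k-1})[\psi]=[\omega\wedge\psi]$, as claimed in (\ref{y3}).

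The main obstacle—and really the only nontrivial verification—is confirming that $\omega\wedge\psi$ defines a genuine class in $H^{k+1}_{\theta_1}(\mathcal{M}/\mathcal{F})$, i.e.\ that it is $d_{b,\theta_1}$-closed. Here I apply the graded Leibniz rule for twisted differentials recorded after Proposition 2.1.1, using the decomposition $\theta_1=\theta+\theta_0$, to get
\[
d_{b,\theta_1}(\omega\wedge\psi)=d_{b,\theta}\omega\wedge\psi+\omega\wedge d_{b,\theta_0}\psi.
\]
The first summand vanishes by the transversally LCK condition (\ref{II12}), and the second vanishes by the hypothesis $d_{b,\theta_0}\psi=0$. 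Well-definedness of $\delta^{k-1}$ on cohomology (independence of the choice of representative and lift) is then the usual abstract consequence of the snake construction, needing no further computation.
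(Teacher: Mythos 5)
Your proof is correct and follows exactly the route the paper intends (the paper states this proposition without writing out the argument, deferring to the standard snake-lemma construction of \cite{L-L-M1}): the cochain-map and exactness checks, the identification $\delta^{k-1}[\psi]=[-\omega\wedge\psi]$ via the lift $(0,\psi)$, and the closedness of $\omega\wedge\psi$ under $d_{b,\theta_1}$ via the Leibniz rule with $\theta_1=\theta+\theta_0$ together with $d_{b,\theta}\omega=0$ are all accurate. Note only that $[\omega\wedge\psi]=[\psi\wedge\omega]$ since $\deg\omega=2$, so your formula agrees with (\ref{y3}) as stated.
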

From the above proposition, we obtain
\begin{corollary}
Let $(\mathcal{M},\mathcal{F},\omega)$ be a transversally LCK foliation with  basic Lee form $\theta$ and such that the basic Lichnerowicz cohomology groups $H^k_{\theta_0}(\mathcal{M}/\mathcal{F})$ and $H^k_{\theta_1}(\mathcal{M}/\mathcal{F})$ have finite dimension, for all $k$. Then, the basic cohomology group $\widehat{H}^k(\mathcal{M}/\mathcal{F})$ has also finite dimension, for all $k$, and
\begin{equation}
\widehat{H}^k(\mathcal{M}/\mathcal{F})\cong\frac{H^k_{\theta_1}(\mathcal{M}/\mathcal{F})}{{\rm Im}\, \delta^{k-2}}\oplus\ker\delta^{k-1},
\label{y4}
\end{equation}
where $\delta^{k}:H^k_{\theta_0}(\mathcal{M}/\mathcal{F})\rightarrow H ^{k+2}_{\theta_1}(\mathcal{M}/\mathcal{F})$ is the homomorphism given by ({\ref{y3}}).
\end{corollary}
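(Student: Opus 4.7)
The plan is to deduce this corollary as a purely formal consequence of part (ii) of the preceding proposition by a standard homological algebra argument. The idea is to extract from the long exact cohomology sequence the five-term segment
\begin{displaymath}
H^{k-2}_{\theta_0}(\mathcal{M}/\mathcal{F})\xrightarrow{-\delta^{k-2}}H^k_{\theta_1}(\mathcal{M}/\mathcal{F})\xrightarrow{(i^k)^*}\widehat{H}^k(\mathcal{M}/\mathcal{F})\xrightarrow{(\pi_2^k)^*}H^{k-1}_{\theta_0}(\mathcal{M}/\mathcal{F})\xrightarrow{-\delta^{k-1}}H^{k+1}_{\theta_1}(\mathcal{M}/\mathcal{F})
\end{displaymath}
and read off kernels and images at the middle three positions. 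Exactness at $H^k_{\theta_1}(\mathcal{M}/\mathcal{F})$ identifies $\ker(i^k)^*$ with $\mathrm{Im}\,\delta^{k-2}$, so the first isomorphism theorem applied to $(i^k)^*$ yields $\mathrm{Im}(i^k)^*\cong H^k_{\theta_1}(\mathcal{M}/\mathcal{F})/\mathrm{Im}\,\delta^{k-2}$. Exactness at $H^{k-1}_{\theta_0}(\mathcal{M}/\mathcal{F})$ identifies $\mathrm{Im}(\pi_2^k)^*$ with $\ker\delta^{k-1}$.

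Combining these identifications with exactness at $\widehat{H}^k(\mathcal{M}/\mathcal{F})$, namely $\ker(\pi_2^k)^*=\mathrm{Im}(i^k)^*$, assembles into the short exact sequence of real vector spaces
\begin{displaymath}
0\longrightarrow\frac{H^k_{\theta_1}(\mathcal{M}/\mathcal{F})}{\mathrm{Im}\,\delta^{k-2}}\stackrel{(i^k)^*}{\longrightarrow}\widehat{H}^k(\mathcal{M}/\mathcal{F})\stackrel{(\pi_2^k)^*}{\longrightarrow}\ker\delta^{k-1}\longrightarrow0.
\end{displaymath}
Since every short exact sequence of vector spaces over a field splits, this directly produces the claimed decomposition \eqref{y4}.

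For the finite-dimensionality assertion, the hypothesis that $H^{\bullet}_{\theta_0}(\mathcal{M}/\mathcal{F})$ and $H^{\bullet}_{\theta_1}(\mathcal{M}/\mathcal{F})$ are finite-dimensional in every degree immediately implies that $\mathrm{Im}\,\delta^{k-2}\subset H^k_{\theta_1}(\mathcal{M}/\mathcal{F})$ and $\ker\delta^{k-1}\subset H^{k-1}_{\theta_0}(\mathcal{M}/\mathcal{F})$ are finite-dimensional, hence so are the quotient and the direct sum in \eqref{y4}, which in turn forces $\widehat{H}^k(\mathcal{M}/\mathcal{F})$ to be finite-dimensional. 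There is no real obstacle in this corollary; the only point requiring attention is bookkeeping of the indices caused by the degree shift in the second summand of $\widehat{\Omega}^k(\mathcal{M}/\mathcal{F})$, which moves $\delta$ two degrees rather than one, and the observation that the splitting is guaranteed precisely because we work with $\mathbb{R}$-vector spaces rather than more general modules.
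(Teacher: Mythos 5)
Your argument is correct and is exactly the route the paper intends: the corollary is stated as an immediate consequence of the long exact sequence of the preceding proposition, obtained by extracting the five-term segment around $\widehat{H}^k(\mathcal{M}/\mathcal{F})$, forming the resulting short exact sequence, and splitting it because one works over a field. The index bookkeeping for $\delta^{k-2}$ and $\delta^{k-1}$ and the finite-dimensionality deduction are both handled correctly.
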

\begin{corollary}
Let $(\mathcal{M},\mathcal{F},\omega)$ be a transversally LCK foliation with  basic Lee form $\theta$ such that the dimensions of the basic cohomology groups $H^k_{\theta_0}(\mathcal{M}/\mathcal{F})$ and $H^k_{\theta_1}(\mathcal{M}/\mathcal{F})$ are finite, for all $k$. If the basic Lichnerowicz class $[\omega]_L$ vanish then, for all $k$, we have
\begin{equation}
\widehat{H}^k(\mathcal{M}/\mathcal{F})\cong H^k_{\theta_1}(\mathcal{M}/\mathcal{F})\oplus H^{k-1}_{\theta_0}(\mathcal{M}/\mathcal{F}).
\label{y5}
\end{equation}
\end{corollary}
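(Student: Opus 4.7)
The plan is to deduce this corollary from the preceding one by showing that under the hypothesis $\omega = d_{b,\theta}\omega'$, the connecting homomorphism $\delta^k : H^k_{\theta_0}(\mathcal{M}/\mathcal{F}) \to H^{k+2}_{\theta_1}(\mathcal{M}/\mathcal{F})$ vanishes identically. Once this is established, the formula $\widehat{H}^k \cong H^k_{\theta_1}/\operatorname{Im}\delta^{k-2} \oplus \ker\delta^{k-1}$ from the previous corollary collapses immediately to the claimed direct sum decomposition.

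The key computation exploits the arithmetic identity $\theta_0 + \theta = m\theta + \theta = (m+1)\theta = \theta_1$, which fits exactly the wedge-product rule for twisted differentials noted earlier in the paper:
\begin{displaymath}
d_{b,\theta_1+\theta_2}(\varphi\wedge\psi)=d_{b,\theta_1}\varphi\wedge\psi+(-1)^{\deg\varphi}\varphi\wedge d_{b,\theta_2}\psi.
\end{displaymath}
Applied with $\theta_1 \mapsto \theta_0$ and $\theta_2 \mapsto \theta$, this yields, for any basic form $\varphi$,
\begin{displaymath}
d_{b,\theta_1}(\varphi\wedge\omega')=d_{b,\theta_0}\varphi\wedge\omega'+(-1)^{\deg\varphi}\varphi\wedge d_{b,\theta}\omega'.
\end{displaymath}
If $\varphi$ represents a class in $H^{k-1}_{\theta_0}(\mathcal{M}/\mathcal{F})$, i.e. $d_{b,\theta_0}\varphi = 0$, then substituting $d_{b,\theta}\omega' = \omega$ gives $\varphi\wedge\omega = (-1)^{k-1}\,d_{b,\theta_1}(\varphi\wedge\omega')$, so $\varphi\wedge\omega$ is $d_{b,\theta_1}$-exact.

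Consequently $\delta^{k-1}[\varphi] = -[\varphi\wedge\omega] = 0$ in $H^{k+1}_{\theta_1}(\mathcal{M}/\mathcal{F})$ for all $k$, i.e. $\delta^{k} \equiv 0$ on every level. Plugging this into the conclusion of the previous corollary gives $\operatorname{Im}\delta^{k-2} = 0$ and $\ker\delta^{k-1} = H^{k-1}_{\theta_0}(\mathcal{M}/\mathcal{F})$, whence
\begin{displaymath}
\widehat{H}^k(\mathcal{M}/\mathcal{F})\cong H^k_{\theta_1}(\mathcal{M}/\mathcal{F})\oplus H^{k-1}_{\theta_0}(\mathcal{M}/\mathcal{F}),
\end{displaymath}
as required. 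There is no substantial obstacle here: the entire content is the one-line verification that the product rule, combined with $\theta_0+\theta=\theta_1$, turns the primitive $\omega'$ of $\omega$ into a primitive of $\varphi\wedge\omega$.
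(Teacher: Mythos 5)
Your proof is correct and follows exactly the route the paper intends: the corollary is deduced from the preceding one by checking that $\varphi\wedge\omega=(-1)^{\deg\varphi}d_{b,\theta_1}(\varphi\wedge\omega')$ whenever $d_{b,\theta_0}\varphi=0$ and $\omega=d_{b,\theta}\omega'$, using the product rule together with $\theta_0+\theta=\theta_1$, so that every connecting homomorphism $\delta^k$ vanishes and the decomposition \eqref{y4} collapses to \eqref{y5}. The paper leaves this verification implicit, but your argument is precisely the standard one (as in the Lichnerowicz--Jacobi setting it is modelled on), and the sign bookkeeping checks out.
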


\subsection{Basic Bott-Chern cohomology of transversally LCK foliations}

In this subsection we present a link between basic Bott-Chern cohomology, basic Lichnerowicz cohomology and $0$-th basic Dolbeault cohomology with values in the associated foliated weight bundle of a transversally LCK foliation $(\mathcal{M},\mathcal{F},\omega)$. The notions are introduced by analogy with the corresponding notions in the case of LCK manifolds, \cite{O-V}.

Let us consider further $(\mathcal{M},\mathcal{F},\omega)$ to be a transversally K\"{a}hlerian foliation. Then the basic K\"{a}hler form $\omega$ determines the basic K\"{a}hler class $[\omega]\in H^{1,1}(\mathcal{M}/\mathcal{F})$, and the difference of basic K\"{a}hler forms which have the same basic K\"{a}hler class is measured by a basic potential $f$:
\begin{displaymath}
\omega_1-\omega=\partial_b\overline{\partial}_bf
\end{displaymath}
see Proposition 3.5.1. from \cite{EK1}. (This also follows from $\partial_b\overline{\partial}_b$-Lemma for transversally
K\"{a}hler foliations, \cite{Noz}). Thus the space of all basic K\"{a}hler structures on a transversally holomorphic foliation $(\mathcal{M},\mathcal{F})$ is locally modeled on $H^{1,1}(\mathcal{M}/\mathcal{F},\mathbb{R})\times(\mathcal{F}_b(\mathcal{M})/{\rm const})$. A similar local description exists for the set of all basic LCK-structures on a given transversally holomorphic foliation, if we fix the basic cohomology class $[\theta]$ of a basic Lee form. Using the basic Bott-Chern-Lichnerowicz class $[\omega]_{BCL}\in H^{1,1}_{BCL}(\mathcal{M}/\mathcal{F})$ of a basic LCK form $\omega$, similarly to \cite{O-V}, we can obtain that the difference of two basic LCK forms in the same basic Bott-Chern-Lichnerowicz class is expressed by a basic potential, just like in transversally K\"{a}hler case, and the set of all basic  LCK structures on a given transversally holomorphic foliation $(\mathcal{M},\mathcal{F})$ is locally parametrized by
\begin{equation}
H^{1,1}_{BCL}(\mathcal{M}/\mathcal{F})\times(\mathcal{F}_b(\mathcal{M})/{\rm Ker}\,d_{b,\theta}d^c_{b,\theta}),
\label{II13}
\end{equation}
where $d_{b,\theta}d^c_{b,\theta}=-2\sqrt{-1}\partial_{b,\theta}\overline{\partial}_{b,\theta}$. 

In order to find a connection between basic Bott-Chern cohomology, basic Lichnerowicz cohomology and $0$--th basic Dolbeault cohomology with values in the associated foliated weight bundle of a transversally LCK foliation $(\mathcal{M},\mathcal{F},\omega)$, we briefly recall some definitions concerning to foliated bundles and basic connections, see \cite{EK-G, Ka-To, Mo}. 

Let $G\hookrightarrow P\rightarrow M$ be a principal bundle with structural group $G\subset {\rm GL}(n,\mathbb{C})$. The group $G$ acts on $P$ on the right and on its Lie algebra $\mathcal{G}$ by the adjoint representation ${\rm Ad}$ i.e., for $g\in G$ and $X\in\mathcal{G}$, ${\rm Ad}_g(X)=gXg^{-1}$. We say that a principal $G$-bundle $P\rightarrow(\mathcal{M},\mathcal{F})$ is a \textit{foliated principal bundle} if it is equipped with a foliation $\mathcal{F}_P$ (\textit{the lifted foliation}) such that the distribution $T\mathcal{F}_P$ is invariant under the right action of $G$, is transversal to the tangent space to the fiber, and projects to $T\mathcal{F}$. A connection $\omega$ on $P$ is called \textit{adapted} to $\mathcal{F}_P$ if the associated horizontal distribution contains $T\mathcal{F}_P$ . An adapted connection $\gamma$ is called a \textit{basic connection} if it is basic as a $\mathcal{G}$-valued form on $(P,\mathcal{F}_P)$.

Let us consider now $E\rightarrow (\mathcal{M},\mathcal{F})$ be a complex vector bundle defined by a cocycle $\{U_i,g_{ij}, G\}$ where $\{U_i\}$ is an open cover of $\mathcal{M}$ and $g_{ij}:U_i\cap U_j\rightarrow G\subset{\rm GL}(n,\mathbb{C})$ are the transition functions. To such a vector bundle we can always associate a principal $G$-bundle $P\rightarrow(\mathcal{M},\mathcal{F})$ whose fibre is the group $G$ and the transition functions
are $g_{ij}$ (viewed as translations on $G$). The complex vector bundle $E\rightarrow (\mathcal{M},\mathcal{F})$ is \textit{foliated} if $E$ is associated to a foliated principal $G$-bundle $P\rightarrow(\mathcal{M},\mathcal{F})$. Let $\Omega^{\bullet}(\mathcal{M},E)$ denote the space of forms on $(\mathcal{M},\mathcal{F})$ with values in $E$. If a connection form $\gamma$ on $P$ is adapted, then
we say that an associated covariant derivative operator  $\nabla$ on $\Omega^{\bullet}(\mathcal{M},E)$ is \textit{adapted} to the foliated bundle. We say that $\nabla$ is a \textit{basic} connection on $E$ if in addition the associated curvature operator $\nabla^2$ satisfies $i_X\nabla^2=0$ for every $X\in\Gamma(\mathcal{F})$.  Note that $\nabla$ is basic if the principal connection $\gamma$ associated to $\nabla$ is basic. Let $\Gamma(E)$ denote the smooth sections of $E$, and let $\nabla$ denote a basic connection on $E$. We say that a section $s:\mathcal{M}\rightarrow E$ is a \textit{basic section} if and only if $\nabla_Xs=0$ for all $X\in\Gamma(\mathcal{F})$. Let  $\nabla_b$ denote the basic connection and $\Gamma_b(E)$ denote the space of basic sections of $E$.

Now, let us consider $E$ to be a foliated complex line bundle over the transversally holomorphic foliation $(\mathcal{M},\mathcal{F})$ with a flat basic connection $\nabla_b$. We denote by $\Omega^{p,q}(\mathcal{M}/\mathcal{F},E)$ the set of all basic $(p,q)$--forms on $(\mathcal{M},\mathcal{F})$ with values in $E$. Consider the basic complex 
\begin{displaymath}
\ldots\longrightarrow\Omega^{p-1,q-1}(\mathcal{M}/\mathcal{F},E)\stackrel{\partial_{b,E}\overline{\partial}_{b,E}}{\longrightarrow}\Omega^{p,q}(\mathcal{M}/\mathcal{F},E)
\end{displaymath}
\begin{equation}
\stackrel{\partial_{b,E}\oplus\overline{\partial}_{b,E}}{\longrightarrow}\Omega^{p+1,q}(\mathcal{M}/\mathcal{F},E)\oplus\Omega^{p,q+1}(\mathcal{M}/\mathcal{F},E)\longrightarrow\ldots,
\label{II15}
\end{equation}
where $\partial_{b,E}$ and $\overline{\partial}_{b,E}$ denote the $(1,0)$ and $(0,1)$-parts of the basic connection operator $\nabla_b:\Omega^{\bullet}(\mathcal{M}/\mathcal{F},E)\rightarrow\Omega^{\bullet+1}(\mathcal{M}/\mathcal{F},E)$. The cohomology of ({\ref{II15}}) denoted by $H^{p,q}_{BC}(\mathcal{M}/\mathcal{F},E)$ is called the \textit{basic Bott-Chern cohomology of $(\mathcal{M},\mathcal{F})$ with values in $E$}. 
\begin{definition}
Let $(\mathcal{M},\mathcal{F},\omega,\theta)$ be a transversally LCK foliation, and $E$ its foliated weight bundle, that is, a trivial complex foliated line bundle with the flat basic connection $d_{b}-\theta$. Consider $\omega$ as a closed basic $(1,1)$-form on $(\mathcal{M},\mathcal{F})$ with values in $E$. Its basic Bott-Chern class $[\omega]_{BC}\in H^{1,1}_{BC}(\mathcal{M}/\mathcal{F},E)$ is called the \textit{ basic Bott-Chern class of the transversally LCK foliation $(\mathcal{M},\mathcal{F},\omega,\theta)$}.
\end{definition}
Now, similarly to \cite{O-V}, we give a characterization of $H^{1,1}_{BC}(\mathcal{M}/\mathcal{F},E)$  in terms of basic Lichnerowicz cohomology of $(\mathcal{M},\mathcal{F},\theta)$ and $0$--th basic Dolbeault cohomology of the foliated weight bundle $E$.

The $0$--th basic Dolbeault cohomology with values in a foliated bundle $E$ can be realized as cohomology of the complex
\begin{equation}
\Gamma_b(E)=\Omega^{0,0}(\mathcal{M}/\mathcal{F}, E)\stackrel{\overline{\partial}_{b,E}}{\longrightarrow}\Omega^{0,1}(\mathcal{M}/\mathcal{F}, E)\stackrel{\overline{\partial}_{b,E}}{\longrightarrow}\Omega^{0,2}(\mathcal{M}/\mathcal{F}, E)\stackrel{\overline{\partial}_{b,E}}{\longrightarrow}\ldots
\label{II16}
\end{equation}
If $E$ is equipped with a flat basic connection, then $\partial_{b,E}:\Omega^{0,1}(\mathcal{M}/\mathcal{F}, E)\rightarrow\Omega^{1,1}(\mathcal{M}/\mathcal{F}, E)$ induces a map
\begin{equation}
H^{0,1}(\mathcal{M}/\mathcal{F},\mathcal{E})\stackrel{\partial^*_{b,E}}{\longrightarrow} H^{1,1}_{BC}(\mathcal{M}/\mathcal{F},E)
\label{II17}
\end{equation}
from the $0$-th basic Dolbeault cohomology with values in the underlying holomorphic foliated bundle (denoted as $\mathcal{E}$) to the basic Bott-Chern cohomology with values in $E$. The basic complex
\begin{equation}
\Gamma_b(E)=\Omega^{0,0}(\mathcal{M}/\mathcal{F}, E)\stackrel{\nabla_b^{1,0}}{\longrightarrow}\Omega^{1,0}(\mathcal{M}/\mathcal{F}, E)\stackrel{\nabla_b^{1,0}}{\longrightarrow}\Omega^{2,0}(\mathcal{M}/\mathcal{F}, E)\stackrel{\nabla_b^{1,0}}{\longrightarrow}\ldots
\label{II18}
\end{equation}
computes the $0$-th basic Dolbeault cohomology with values in holomorphic foliated bundle $\mathcal{E}^{'}$ with a holomorphic structure defined by the complex conjugate of the $\nabla_b^{1,0}$-part of the basic connection. When the foliated bundle $E$ is real, we have $\mathcal{E}\approx\mathcal{E}^{'}$. Then the cohomology of the basic complex ({\ref{II18}}) is naturally identified with $\overline{H^{0,\bullet}(\mathcal{M}/\mathcal{F},\mathcal{E})}$. The map $\overline{\partial}_{b,E}:\Omega^{1,0}(\mathcal{M}/\mathcal{F}, E)\rightarrow\Omega^{1,1}(\mathcal{M}/\mathcal{F}, E)$ defines a homomorphism
\begin{equation}
\overline{H^{0,1}(\mathcal{M}/\mathcal{F},\mathcal{E})}\stackrel{\overline{\partial}^*_{b,E}}{\longrightarrow} H^{1,1}_{BC}(\mathcal{M}/\mathcal{F},E)
\label{II19}
\end{equation}
which is entirely similar to ({\ref{II17}}).

Following step by step the proof of Theorem 4.7. from \cite{O-V}, we obtain an analogous result for basic cohomologies, which allows to compute the basic Bott-Chern cohomology classes in terms of $0$--th basic Dolbeault cohomology and basic Lichnerowicz cohomology.
\begin{theorem}
Let $(\mathcal{M},\mathcal{F})$ be a transversally holomorphic foliation and $E_{\mathbb{R}}$ a trivial real foliated line bundle with flat basic connection $d_b-\theta$, where $\theta$ is a real closed basic $1$-form. Denote by $E$ its complexification, and let $\mathcal{E}$ be the underlying holomorphic bundle. Then there is an exact sequence
\begin{equation}
H^{0,1}(\mathcal{M}/\mathcal{F},\mathcal{E})\oplus\overline{H^{0,1}(\mathcal{M}/\mathcal{F},\mathcal{E})}\stackrel{\partial^*_{b,E}+\overline{\partial}^*_{b,E}}{\longrightarrow}H^{1,1}_{BC}(\mathcal{M}/\mathcal{F},E)\stackrel{\nu}{\longrightarrow}H^2_{\theta}(\mathcal{M}/\mathcal{F}),
\label{II20}
\end{equation}
where $H^2_{\theta}(\mathcal{M}/\mathcal{F})$ is the basic Lichnerowicz cohomology, $\nu$ a tautological map, and the first arrow is obtained as a direct sum of ({\ref{II17}}) and ({\ref{II19}}).
\end{theorem}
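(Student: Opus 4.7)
The plan is to identify the $E$-valued Bott-Chern differentials with the Hodge components of the Lichnerowicz differential $d_{b,\theta}$ and then imitate the Ornea--Verbitsky argument bidegree by bidegree. Because $E_{\mathbb{R}}$ is trivial as a real foliated line bundle, its complexification $E$ is trivial as a complex line bundle, so $E$-valued basic forms can be identified with ordinary basic forms, and the flat basic connection acts as $\nabla_b\varphi = d_b\varphi - \theta\wedge\varphi = d_{b,\theta}\varphi$. Decomposing into bidegrees gives $\partial_{b,E}=\partial_{b,\theta}$ and $\overline{\partial}_{b,E}=\overline{\partial}_{b,\theta}$; in particular the complex computing $H^{\bullet,\bullet}_{BC}(\mathcal{M}/\mathcal{F},E)$ coincides with the basic Bott-Chern-Lichnerowicz complex (\ref{II8}).

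I would then define the tautological map by $\nu([\alpha]_{BC})=[\alpha]_{\theta}$; well-definedness is immediate since every $\partial_{b,E}$- and $\overline{\partial}_{b,E}$-closed form is $d_{b,\theta}$-closed and, using $\overline{\partial}_{b,\theta}^2=0$, every $\partial_{b,E}\overline{\partial}_{b,E}$-exact form equals $d_{b,\theta}(\overline{\partial}_{b,E}\beta)$. For the inclusion ${\rm Im}\subseteq{\rm Ker}\,\nu$, a representative $s\in\Omega^{0,1}(\mathcal{M}/\mathcal{F},E)$ of a class in $H^1(\mathcal{M}/\mathcal{F},\mathcal{E})$ satisfies $\overline{\partial}_{b,E}s=0$, hence $d_{b,\theta}s=\partial_{b,E}s$, which gives $\nu[\partial_{b,E}s]_{BC}=0$; the argument for $\overline{\partial}^*_{b,E}$, using (\ref{II18}) to model $\overline{H^1(\mathcal{M}/\mathcal{F},\mathcal{E})}$, is symmetric.

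The heart of the proof is the reverse inclusion ${\rm Ker}\,\nu\subseteq{\rm Im}$. Given $[\alpha]_{BC}\in{\rm Ker}\,\nu$, write $\alpha=d_{b,\theta}\beta$ with $\beta=\beta^{1,0}+\beta^{0,1}$ and compare bidegrees; since $\alpha$ is purely of type $(1,1)$, one obtains
\begin{displaymath}
\partial_{b,E}\beta^{1,0}=0,\qquad \overline{\partial}_{b,E}\beta^{0,1}=0,\qquad \alpha=\partial_{b,E}\beta^{0,1}+\overline{\partial}_{b,E}\beta^{1,0},
\end{displaymath}
so $\beta^{0,1}$ and $\beta^{1,0}$ define classes in $H^1(\mathcal{M}/\mathcal{F},\mathcal{E})$ and $\overline{H^1(\mathcal{M}/\mathcal{F},\mathcal{E})}$ respectively, whose image under $\partial_{b,E}^*+\overline{\partial}_{b,E}^*$ is exactly $[\alpha]_{BC}$. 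The main obstacle is conceptual rather than computational: one must carefully use the reality of $E_{\mathbb{R}}$ (and hence the identification $\mathcal{E}\approx\mathcal{E}^{'}$) to ensure that $\overline{H^1(\mathcal{M}/\mathcal{F},\mathcal{E})}$ is indeed computed by the $\nabla_b^{1,0}$-complex (\ref{II18}), so that the second summand of the domain of $\partial_{b,E}^*+\overline{\partial}_{b,E}^*$ correctly absorbs the $\beta^{1,0}$ term produced by the bidegree decomposition.
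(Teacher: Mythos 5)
Your argument is correct and is precisely the adaptation the paper intends: the paper gives no independent proof but states that the result follows ``step by step'' from Theorem 4.7 of Ornea--Verbitsky, and your identification $\nabla_b=d_{b,\theta}$, $\partial_{b,E}=\partial_{b,\theta}$, $\overline{\partial}_{b,E}=\overline{\partial}_{b,\theta}$ together with the bidegree decomposition of $\alpha=d_{b,\theta}\beta$ in the kernel-of-$\nu$ step is exactly that argument transplanted to basic forms. No gap; the reality of $E_{\mathbb{R}}$ is used exactly where you say it is, to identify the cohomology of the $\nabla_b^{1,0}$-complex with $\overline{H^1(\mathcal{M}/\mathcal{F},\mathcal{E})}$.
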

From the above theorem, we immediately obtain
\begin{corollary}
Let $(\mathcal{M},\mathcal{F},\omega,\theta)$ be a transversally LCK foliation, $E$ the corresponding flat foliated weight bundle, and $\mathcal{E}$ the underlying holomorphic foliated bundle. Asume that $H^{0,1}(\mathcal{M}/\mathcal{F},\mathcal{E})=0$ and $H^2_{\theta}(\mathcal{M}/\mathcal{F})=0$. Then $H^{1,1}_{BC}(\mathcal{M}/\mathcal{F},E)=0$.
\end{corollary}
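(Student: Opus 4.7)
The plan is to deduce the corollary as an immediate consequence of the exact sequence just established in the preceding theorem, namely
\begin{equation*}
H^1(\mathcal{M}/\mathcal{F},\mathcal{E})\oplus\overline{H^1(\mathcal{M}/\mathcal{F},\mathcal{E})}\stackrel{\partial^*_{b,E}+\overline{\partial}^*_{b,E}}{\longrightarrow}H^{1,1}_{BC}(\mathcal{M}/\mathcal{F},E)\stackrel{\nu}{\longrightarrow}H^2_{\theta}(\mathcal{M}/\mathcal{F}).
\end{equation*}
All the analytic content (construction of the maps $\partial^*_{b,E}$, $\overline{\partial}^*_{b,E}$, $\nu$, and verification of exactness at the middle term) is imported directly from that theorem, so the task is reduced to a short diagram chase on the two endpoints.

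First I would observe that the hypothesis $H^1(\mathcal{M}/\mathcal{F},\mathcal{E})=0$ automatically forces $\overline{H^1(\mathcal{M}/\mathcal{F},\mathcal{E})}=0$, since complex conjugation is an $\mathbb{R}$-linear isomorphism of the complex vector space with its conjugate. Consequently the source of the first arrow is the zero space, and the image of $\partial^*_{b,E}+\overline{\partial}^*_{b,E}$ inside $H^{1,1}_{BC}(\mathcal{M}/\mathcal{F},E)$ is trivial. By exactness at the middle position, $\ker\nu$ coincides with this image and is therefore $\{0\}$, so $\nu$ is injective. Combining with the second hypothesis $H^2_{\theta}(\mathcal{M}/\mathcal{F})=0$, an injection of $H^{1,1}_{BC}(\mathcal{M}/\mathcal{F},E)$ into the zero group forces $H^{1,1}_{BC}(\mathcal{M}/\mathcal{F},E)=0$, which is the desired conclusion (the statement is to be read with coefficients in the weight bundle $E$, in the same sense in which the basic Bott--Chern class $[\omega]_{BC}$ was defined for a transversally LCK foliation).

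Since the corollary is purely a formal consequence of the exact sequence \eqref{II20}, there is no serious obstacle: the real work has already been done in establishing the theorem. The only points requiring a brief verbal justification are the stability of the vanishing hypothesis under complex conjugation and the interpretation of the target cohomology as Bott--Chern with coefficients in $E$; both are immediate.
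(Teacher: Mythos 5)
Your proposal is correct and follows exactly the route the paper intends: the corollary is stated as an immediate consequence of the exact sequence \eqref{II20}, and your diagram chase (vanishing source forces $\nu$ injective, vanishing target then kills the middle term) is precisely that argument. Your remark that the conclusion should be read as $H^{1,1}_{BC}(\mathcal{M}/\mathcal{F},E)=0$, i.e.\ with coefficients in the weight bundle $E$, correctly identifies what is evidently a typographical omission in the paper's statement.
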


\noindent 
Cristian Ida\\
Department of Mathematics and Computer Science,\\
Faculty of Mathematics and Computer Science,\\
Transilvania University of Bra\c{s}ov,\\
Address: Bra\c{s}ov 500091, Str. Iuliu Maniu 50, Rom\^{a}nia.\\
E-mail:\textit{cristian.ida@unitbv.ro}

\end{document}